\newcommand{\HH}[0]{\mathbb{H}}
\newcommand{\ZZ}{\mathbb{Z}}
\DeclareRobustCommand{\cev}[1]{%
  {\mathpalette\do@cev{#1}}%
}
\newcommand{\do@cev}[2]{%
  \vbox{\offinterlineskip
    \sbox\z@{$\m@th#1 x$}%
    \ialign{##\cr
      \hidewidth\reflectbox{$\m@th#1\vec{}\mkern4mu$}\hidewidth\cr
      \noalign{\kern-\ht\z@}
      $\m@th#1#2$\cr
    }%
  }%
}
\newtheorem{theorem}{Theorem}
\newtheorem{lemma}[theorem]{Lemma}
\newtheorem{corollary}[theorem]{Corollary}
\theoremstyle{definition}
\newtheorem{example}[theorem]{Example}
\newcommand{\C}{\mathcal{C}}
\newcommand{\Mod}{\mathrm{Mod}}
\newcommand\tsim{\kern-.4em\sim}
\newcommand{\from}{\colon} 
\renewcommand{\phi}{\varphi}
\renewcommand{\epsilon}{\varepsilon}
\DeclareMathOperator{\lcm}{lcm}
\definecolor{mutedgreen}{rgb}{.1,.75,0.15}
\definecolor{darkgreen}{rgb}{0,0.6,0.4}
\definecolor{purple}{rgb}{0.4,0,0.6}
\begin{document}

\title{There are no periodic Wright maps}

\author{David Futer}
\address{Department of Mathematics\\ 
Temple University}
\email{\href{mailto:dfuter@temple.edu}{dfuter@temple.edu}}

\begin{abstract}
This paper proves that every periodic automorphism of a closed hyperbolic surface $S$ sends some curve to a nearly disjoint curve. In particular, periodic maps cannot have the property that every curve fills with its image, so no such map can give a positive answer to a question of Wright. This paper also answers a question of Schleimer about irreducible periodic surface maps.
\end{abstract}
\date{\today}
\maketitle

\section{Introduction}

This note proves the following result:

\begin{theorem}\label{Thm:Main}
Let $S$ be a closed orientable surface of positive genus. Let $\varphi \in \Mod(S)$ be a periodic mapping class. Then there is an essential simple closed curve $\alpha \subset S$ such that $i(\alpha, \varphi(\alpha)) \leq 1$.
\end{theorem}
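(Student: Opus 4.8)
The plan is to reduce everything to the geometry of the quotient orbifold. First I would invoke Nielsen realization (for a cyclic group this is classical) to realize $\varphi$ by an honest finite-order isometry $f$ of a hyperbolic metric on $S$, so that $\langle f\rangle \cong \ZZ/n$ acts by isometries with quotient a closed $2$-orbifold $\orb = S/\langle f\rangle$ and branched covering $p \from S \to \orb$. Writing $\orb$ as a surface of genus $h$ with cone points of orders $m_1,\dots,m_k$, the Riemann--Hurwitz formula gives $2-2g = n\,\chi^{\mathrm{orb}}(\orb)$; since $g \ge 1$ this forces $\chi^{\mathrm{orb}}(\orb) \le 0$, so $\orb$ is Euclidean or hyperbolic. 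I expect the argument to be organized around how much room $\orb$ has for essential simple closed curves.

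The main case is when $\orb$ carries an essential simple closed curve $\beta$ disjoint from the cone points. Since $p$ is an honest covering away from the cone points, $p^{-1}(\beta)$ is an embedded $1$-manifold, and because $p \circ f = p$ it is invariant under $f$; thus $f$ merely permutes its components. Taking $\alpha$ to be any component, $f(\alpha)$ is a \emph{disjoint} component (or $\alpha$ itself), so $i(\alpha,f(\alpha)) = 0 \le 1$. The only point to check is that $\alpha$ is essential in $S$: this holds because an essential curve on a hyperbolic (or Euclidean) orbifold has infinite order in $\pi_1^{\mathrm{orb}}(\orb)$, so a component of its preimage, which covers $\beta$ with some degree $d$, represents a power $[\beta]^d$ of infinite order and cannot bound a disk. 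A short enumeration shows that $\orb$ fails to contain such a $\beta$ exactly when $\orb$ is a sphere with three cone points; this is precisely the case in which $\varphi$ is irreducible, since otherwise the multicurve $p^{-1}(\beta)$ would be a $\varphi$-invariant essential multicurve.

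This leaves the crux: the triangle-orbifold (irreducible) case, which I expect to be the main obstacle and the reason the bound is $1$ rather than $0$. Here no essential curve downstairs exists to lift, and irreducibility forbids any $f$-invariant essential multicurve, so one cannot obtain $i(\alpha,f(\alpha)) = 0$ from a permuted family; the curve must be built by hand and will meet its image exactly once. The plan is to start from geodesic arcs joining the cone points of $\orb$, lift them, and assemble from an $f$-orbit of a carefully chosen geodesic segment a simple closed curve whose only crossing with its $f$-image is forced by the branching at a single cone-point preimage. The delicate part is controlling the combinatorics of these lifts in the negatively curved surface tightly enough to guarantee simultaneously that the curve is essential and that it crosses its image precisely once; this is where the geometry of the finitely many hyperbolic triangle groups must be used, together with the Euclidean triangle cases $g=1$, which can be settled by an explicit lattice computation for the elliptic elements of $\mathrm{SL}_2(\ZZ)$. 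Once this single intersection is arranged, $i(\alpha,\varphi(\alpha)) \le 1$ follows and the theorem is complete.
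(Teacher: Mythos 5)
Your reduction is sound and matches the paper's own first steps: Nielsen realization, lifting an essential simple closed curve of the quotient orbifold in the reducible case, and the observation that the only case left is when $S/\langle f \rangle$ is a sphere with three cone points (this is exactly \Cref{Lem:Turnover}). But that remaining case is the entire content of the theorem, and your proposal does not prove it --- it announces a plan (``lift geodesic arcs joining the cone points \dots\ assemble from an $f$-orbit of a carefully chosen geodesic segment a simple closed curve'') and then concedes that the ``delicate part'' of controlling the combinatorics of the lifts is unresolved. The paper's constructive proof spends essentially all of its effort exactly there: it shows that the preimage of the geodesic arc $\beta$ from $x_1$ to $x_2$ cuts $S$ into $n = |G|/p_3$ isometric convex polygons, glued in a cyclic fashion and cyclically permuted by $f$ (\Cref{Lem:CyclicLabels,Lem:SurfacePolygons}, via the homomorphism $\nu \from \pi_1(Y) \to \ZZ/n\ZZ$ that kills $\gamma_3$), and then builds $\alpha$ either inside a single polygon when $n = 1$ (\Cref{Lem:OnePolygonCurve}) or through two adjacent polygons when $n \geq 2$ (\Cref{Lem:SeveralPolygonsCurve}), with strict convexity of the polygons guaranteeing that $\alpha$ is essential. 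None of this structure appears in your write-up, so the proof has a genuine gap at its crux.

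A secondary point: your heuristic for why the bound must be $1$ rather than $0$ in the irreducible case is mistaken. You argue that irreducibility forbids $i(\alpha, f(\alpha)) = 0$ because a disjoint image would come from ``a permuted family''; but $i(\alpha, f(\alpha)) = 0$ does not make $\bigcup_k f^k(\alpha)$ an embedded multicurve (later images may cross $\alpha$ or each other), so there is no conflict with irreducibility. In fact the paper proves that when $|G| = 2p_3$ there is an irreducible $f$ and a curve with $i(\alpha, f(\alpha)) = 0$ (\Cref{Lem:SeveralPolygonsCurve}, realized concretely in \Cref{Ex:FixedPointFree}); this is precisely what answers Schleimer's question about maps that are irreducible but not strongly irreducible. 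Finally, note that the paper records a two-line alternate proof that would have closed your gap: with the Nielsen-realized metric in hand, take $\alpha$ to be a systole; then $f(\alpha)$ is also a systole, and two systoles on a closed hyperbolic surface intersect at most once.
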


Here, $i(\cdot, \cdot)$ denotes geometric intersection number. We refer to Farb and Margalit \cite{FarbMargalit:Primer} for this and other standard terminology for surfaces and mapping class groups. 

Since two simple closed curves that intersect once can only fill a torus or punctured torus, \Cref{Thm:Main} implies

\begin{corollary}\label{Cor:NotFilling}
Let $S$ be a closed orientable surface of genus $g \geq 2$. Then, for every periodic homeomorphism $f \from S \to S$, there is some essential simple closed curve $\alpha$ that does not fill $S$ with its image $f(\alpha)$.
\end{corollary}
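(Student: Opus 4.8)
The plan is to deduce \Cref{Cor:NotFilling} directly from \Cref{Thm:Main}, so that essentially all of the work is already done and only a standard Euler characteristic count remains. First I would pass from the homeomorphism $f$ to its mapping class. Since $f$ is periodic, some power $f^n$ equals $\mathrm{id}$, so the class $\varphi = [f] \in \Mod(S)$ satisfies $\varphi^n = 1$ and is therefore periodic. As $g \geq 2 > 0$, \Cref{Thm:Main} applies and produces an essential simple closed curve $\alpha$ with $i(\alpha, \varphi(\alpha)) \leq 1$. Because geometric intersection number depends only on isotopy classes and $\varphi(\alpha) = [f(\alpha)]$, this immediately gives $i(\alpha, f(\alpha)) \leq 1$.

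It then remains to check that a pair of essential simple closed curves meeting at most once cannot fill a surface of genus $g \geq 2$. Writing $\beta = f(\alpha)$ and $k = i(\alpha, \beta) \leq 1$, I would take geodesic representatives (filling is an isotopy invariant, so this is harmless) and view $\alpha \cup \beta$ as a $4$-valent graph on $S$ with $k$ vertices and $2k$ edges. If the pair filled $S$, every complementary region would be an open disk, so the number of faces would be $F = \chi(S) - (k - 2k) = (2 - 2g) + k$; but $F \geq 1$, forcing $k \geq 2g - 1 \geq 3$ and contradicting $k \leq 1$. Equivalently, a regular neighborhood of $\alpha \cup \beta$ is an annulus, a disjoint union of two annuli, or a one-holed torus, according as $k = 0$ with the curves isotopic, $k = 0$ with the curves disjoint and non-isotopic, or $k = 1$; none of these is all of $S$ once $g \geq 2$. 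Hence $\alpha$ and $f(\alpha)$ do not fill $S$.

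I do not expect any genuine obstacle here: the entire difficulty is concentrated in \Cref{Thm:Main}, and the deduction reduces to the elementary observation—already quoted in the text—that curves intersecting at most once fill at most a torus or once-punctured torus. The only points needing a word of care are that the relevant notions (intersection number and filling) are isotopy-invariant, so the mapping-class output of the theorem transfers to the honest homeomorphism $f$, and that the degenerate case $i(\alpha, f(\alpha)) = 0$—including the possibility that $f(\alpha)$ is isotopic to $\alpha$ when $\varphi$ is trivial—is covered by the very same count.
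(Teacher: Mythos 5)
Your proposal is correct and follows the same route as the paper: the paper deduces \Cref{Cor:NotFilling} in one line from \Cref{Thm:Main} together with the standard fact that two simple closed curves intersecting at most once can fill only a torus or once-punctured torus. You simply spell out that fact with an Euler characteristic/regular neighborhood count, which is a fine (and correct) elaboration rather than a different argument.
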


\Cref{Cor:NotFilling} is motivated by a question of Wright  \cite[Question 5]{KentLeininger:AtoroidalBundles}. He asked whether there exists a homeomorphism $f \from S \to S$ on a closed hyperbolic surface $S$ such that every curve $\alpha$ fills $S$ with its image $f(\alpha)$, and furthermore $f$ has no fixed points. 
If such a map $f$ exists and is pseudo-Anosov, Wright proved that the embedding $S \hookrightarrow \operatorname{Conf}_2(S)$ given by $x \mapsto (x, f(x))$ is $\pi_1$--injective, and sends every nontrivial loop to a pseudo-Anosov surface braid \cite[Lemma 9]{KentLeininger:AtoroidalBundles}. The resulting surface subgroup would give rise to an atoroidal surface bundle over a surface. If the map $f$ in the construction is periodic, the surface bundle $E$ would have a complex structure, although  \cite[Lemma 9]{KentLeininger:AtoroidalBundles} does not guarantee that it would be atoroidal.

In very recent work, Kent and Leininger \cite{KentLeininger:AtoroidalBundles} used a variant of Wright's construction to build atoroidal surface bundles over surfaces. 
Kent and Leininger also asked whether some variant of Wright's construction, with a periodic map $f$,  might give an atoroidal surface bundle with a complex structure.
By obstructing a periodic answer to Wright's question, \Cref{Cor:NotFilling} illustrates the difficulty in constructing such a bundle.

Wright's question for pseudo-Anosov maps is interesting and open. However, there is a tension between two hypotheses in the question, because
joint work of the author with Aougab and Taylor \cite{AFT:FixedPoints} shows that  a pseudo-Anosov map $f$ with big translation length in the curve graph $\C(S)$ must also have many fixed points. 

There are at least two distinct proofs of \Cref{Thm:Main}. 
The first proof is constructive: it derives structural information about a $\varphi$--invariant polygonal decomposition of $S$, and uses this structure to build an appropriate curve $\alpha$. In certain special cases (see \Cref{Ex:FixedPointFree}), our structural information 
provides an example of a periodic map $f \from S \to S$ that is irreducible and sends an essential closed curve $\alpha$ to a disjoint curve. In the terminology of Schleimer~\cite{Schleimer:StronglyIrreducible}, this map $f$ fails to be strongly irreducible, hence \Cref{Ex:FixedPointFree} provides a positive answer to Schleimer's question \cite[first bullet of Section 5]{Schleimer:StronglyIrreducible}.

An alternate argument, suggested by Wright, is extremely quick. If $S$ is a hyperbolic surface, Nielsen's realization theorem \cite{Nielsen:Realization} implies that there is a hyperbolic metric on which a representative map $f \in \varphi$ acts by isometry. If $\alpha$ is a systole of this metric on $S$, then $f(\alpha)$ is also a systole. But two systoles on a closed surface can intersect at most once (see e.g. \cite[Propositions 3.2 and 3.3]{FanoniParlier:Systoles}).

\subsection*{Acknowledgements}
I thank Autumn Kent, Chris Leininger, and Dan Margalit for an enlightening discussion of Wright's question. I thank Saul Schleimer for pointing out his question \cite[Section 5]{Schleimer:StronglyIrreducible} and for correcting a mistake in an earlier version of \Cref{Lem:SeveralPolygonsCurve}. I thank Alex Wright for a helpful discussion of his question, and for generously sharing a very short alternate proof of \Cref{Thm:Main}.
Finally, I thank the NSF for its support via grant DMS--2405046.

\section{Constructive proof}

As mentioned above, our original proof of \Cref{Thm:Main} is constructive, and provides structural information.
The case of $S = T^2$ serves as a good warm-up.

\subsection{The torus}\label{Sec:Torus}

Every nontrivial periodic element of $\Mod(T^2) \cong SL(2,\ZZ)$ has order 2, 3, 4, or 6; see \cite[Section 7.1.1]{FarbMargalit:Primer}. The unique order--$2$ element ($\varphi = -I$) sends every curve $\alpha$ to itself, with reversed orientation. 

If $\varphi$ has order 4, then a representative homeomorphism $f$ acts by rotation on a square fundamental domain for $T^2$. Thus, if $\alpha$ is a simple closed curve obtained by joining opposite sides of the square, then $i(\alpha, f(\alpha)) = 1$. Similarly, if $\varphi$ has order 3 or 6, then a representative homeomorphism $f$ acts by rotation on a hexagonal fundamental domain. Again, we find a curve $\alpha$ such that $i(\alpha, f(\alpha)) = 1$ by joining opposite sides of the hexagon. This proves \Cref{Thm:Main} for the torus.


\subsection{Hyperbolic surfaces}
From now on, suppose $S$ is a hyperbolic surface and $\varphi \in \Mod(S)$ is a periodic mapping class. We may assume $\varphi$ is irreducible, as otherwise any curve in a reducing system satisfies $i(\alpha, \varphi(\alpha)) = 0$. By the cyclic case of Nielsen realization (see \cite{Nielsen:Realization} and \cite[Theorem 7.1]{FarbMargalit:Primer}), we fix a hyperbolic structure on $S$ such that a representative homeomorphism $f \in \varphi$ acts on $S$ by isometry.

We will prove that the hyperbolic metric on $S$ can be obtained by gluing together $n$ identical convex polygons in a cyclic fashion,
such that $f$ acts on the polygons by a cyclic permutation. This will allow us to construct an essential simple closed curve $\alpha$ that intersects at most two of the polygons, with at most one arc of intersection with each polygon. It will follow that 
$i(\alpha, \varphi(\alpha)) \leq 1$.

The following fact is well-known.

\begin{lemma}\label{Lem:Turnover}
Let $f \from S \to S$ be a periodic, irreducible isometry.  Then the quotient orbifold $Y = S / \langle f \rangle$ is 
a sphere with three cone points.
\end{lemma}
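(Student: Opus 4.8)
The plan is to analyze the quotient orbifold $Y = S/\langle f \rangle$ using the orbifold Euler characteristic and the hypothesis that $f$ is irreducible. Since $f$ is a periodic orientation-preserving isometry, the quotient $Y$ is a closed orientable $2$-orbifold, and the projection $S \to Y$ is an orbifold covering of some degree $n = \operatorname{ord}(f)$. The underlying surface of $Y$ has some genus $h$, and $Y$ carries finitely many cone points of orders $m_1, \dots, m_k$ corresponding to the orbits of points with nontrivial stabilizer under $\langle f \rangle$.

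The first step is to record the Riemann--Hurwitz / orbifold Euler characteristic relation $\chi(S) = n \cdot \chi^{\mathrm{orb}}(Y)$, where
\[
\chi^{\mathrm{orb}}(Y) = (2 - 2h) - \sum_{j=1}^{k}\left(1 - \tfrac{1}{m_j}\right).
\]
Since $S$ is hyperbolic, $\chi(S) < 0$, so $\chi^{\mathrm{orb}}(Y) < 0$ as well, meaning $Y$ is a hyperbolic orbifold. The second step is to extract strong constraints from irreducibility. I would argue that if $Y$ had positive genus, or if it were a sphere with fewer than three cone points, or a sphere with at least four cone points, then one could produce an essential simple closed curve on $Y$ whose preimage in $S$ is an $f$-invariant multicurve, contradicting irreducibility. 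Concretely: a nonseparating curve on a positive-genus $Y$, or a curve separating cone points when there are four or more of them, lifts to an $f$-invariant reducing system for $\varphi$. Thus $h = 0$ and $k = 3$.

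The third step handles the small cases that survive the Euler characteristic bound but must still be ruled out. A sphere with zero, one, or two cone points is either nonhyperbolic or has a cyclic quotient structure, so I would check directly that these cannot arise for a hyperbolic $S$ with irreducible $f$: the sphere with at most two cone points is a bad or spherical orbifold (nonhyperbolic), and the torus case ($h=1$, $k=0$) is nonhyperbolic too, each contradicting $\chi^{\mathrm{orb}}(Y) < 0$. This leaves exactly the sphere with three cone points, which is the desired conclusion.

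The main obstacle I anticipate is making the irreducibility argument fully rigorous, specifically showing that an essential simple closed curve on the quotient orbifold $Y$ pulls back to an \emph{essential} $f$-invariant multicurve on $S$ whose components are nonperipheral simple closed curves. One must be careful that the chosen curve on $Y$ avoids the cone points and is not null-homotopic in the orbifold, and that its preimage components are genuinely essential in $S$ rather than trivial or boundary-parallel; the case distinctions (separating versus nonseparating, and whether the separating curve isolates a single cone point from the rest) need to be organized so that in every situation with $h \geq 1$ or $k \geq 4$ such a curve exists. Since the lemma is asserted to be well-known, I would expect the author's proof to invoke the orbifold Euler characteristic computation together with a brief remark that more than three cone points or positive genus yields a reducing curve, rather than belaboring these topological details.
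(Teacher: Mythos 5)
Your proposal is correct and follows essentially the same route as the paper: both arguments rest on the observation that an essential simple closed curve in the quotient pulls back to an $f$-invariant multicurve in $S$, contradicting irreducibility, with Euler characteristic ruling out the non-hyperbolic quotients. The paper merely packages your genus/cone-point case analysis more compactly by deleting the cone points and noting that the pair of pants is the only punctured surface of negative Euler characteristic containing no essential simple closed curve, and it leaves the essentiality of the preimage components at the same level of implicitness that you flagged as the remaining technical point.
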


\begin{proof}
Removing the cone points of $Y$ yields a non-singular punctured surface $Z$ with negative Euler characteristic. If $Z$ were any surface other than a pair of pants, it would contain an essential simple closed curve, whose preimage in $S$ would be a multicurve stabilized by  $f$. But we have assumed that $f$ is irreducible.
\end{proof}

From now on, we assume that $f \from S \to S$ is a periodic irreducible isometry, as in \Cref{Lem:Turnover}.
Thus $Y$ is a sphere with cone points $x_1, x_2, x_3$. Let $p_i$ be the order of the cone point $x_i$, and relabel so that $p_1 \leq p_2 \leq p_3$. Let $G = \langle f \rangle$, and let $r = p_3$.

\begin{lemma}\label{Lem:FixedPoints}
The isometry $f \from S \to S$ has fixed points if and only if $|G| = r$. 
\end{lemma}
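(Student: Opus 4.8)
The plan is to translate ``$f$ has a fixed point'' into a statement about point stabilizers for the action of $G = \langle f \rangle$ on $S$, and then read off the answer directly from the cone orders $p_1 \le p_2 \le p_3 = r$.

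First I would record the dictionary between the $G$--action and the quotient orbifold $Y = S/G$. Since $Y$ is a closed orientable orbifold (a sphere with cone points), the action of $G$ preserves orientation, so every point stabilizer $G_p$ is a cyclic subgroup of $G$, and the singular points of $Y$ are precisely the images of points of $S$ with nontrivial stabilizer. Moreover, a point lying over the cone point $x_i$ has stabilizer of order exactly $p_i$. In particular, the set of orders of nontrivial point stabilizers occurring in $S$ is exactly $\{p_1, p_2, p_3\}$. This is where the conclusion of \Cref{Lem:Turnover}, that $Y$ is a sphere with three cone points and no boundary or mirror locus, is used.

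Next comes the key group-theoretic observation. Because $f$ generates $G$, a point $p \in S$ satisfies $f(p) = p$ if and only if $f \in G_p$, which, as $\langle f \rangle = G$, happens if and only if $G_p = G$. Thus $f$ has a fixed point if and only if some point of $S$ has full stabilizer $G$, that is, if and only if $|G|$ occurs as the order of a point stabilizer. By the dictionary above, this is equivalent to $p_i = |G|$ for some $i$.

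Finally I would close the loop using divisibility. Each $G_p$ is a subgroup of the cyclic group $G$, so $p_i \mid |G|$, and in particular $p_i \le |G|$ for every $i$. Since $p_3 = r$ is the largest of the three cone orders, $p_i = |G|$ for some $i$ holds precisely when $p_3 = r = |G|$; combining this with the previous step gives the claim. I expect no serious obstacle here: the only point requiring a little care is the standard fact that point stabilizers of a finite orientation-preserving action on a surface are cyclic of order equal to the corresponding cone order of the quotient, and that under the irreducibility hypothesis these orders are exhausted by $p_1, p_2, p_3$.
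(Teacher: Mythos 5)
Your proof is correct and follows essentially the same route as the paper's: both arguments identify fixed points of $f$ with points whose full stabilizer is $G$, note that stabilizers of preimages of the cone point $x_i$ have order exactly $p_i$, and conclude using $p_1 \leq p_2 \leq p_3 = r$. The only cosmetic difference is that you phrase the forward direction via the divisibility $p_i \mid |G|$, while the paper counts the $|G|/p_3 = 1$ preimages of $x_3$ directly; the content is identical.
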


\begin{proof}
Any preimage $\widehat x_i \in S$ of a cone point $x_i \in Y$ must have a stabilizer of order $p_i$. Thus the number of distinct preimages of $x_i$ is $|G|/ p_i$. If $|G| = r = p_3$, then $x_3$ has a unique preimage $\widehat x_3$ that is fixed by $f$. 

Conversely, any fixed point of $f$ must project to some cone point $x_i \in Y$.  If $f$ fixes a point $\widehat x_i \in S$, then the full group $G$ stabilizes $\widehat x_i$, hence $|G| = p_i$ for some $i$. Since $p_1 \leq p_2 \leq p_3$, we conclude that $|G| = p_3 = r$.
\end{proof}

The cases $|G| = r$ and $|G| > r$ both occur; see \Cref{Ex:Bolsa,Ex:FixedPointFree}.

\subsection*{Algebraic setup}
Let $\gamma_i$ be a simple closed loop about $x_i$, with all three loops based at a common basepoint $y \in Y$, oriented so that $\gamma_1 \gamma_2 \gamma_3 = 1 \in \pi_1(Y)$. See \Cref{Fig:Notation}, left. Accordingly,
\begin{equation}\label{Eqn:OrbifoldGroup}
\pi_1(Y, y) \cong \langle \gamma_1, \gamma_2, \gamma_3 \: : \: \gamma_i^{p_i} = 1, \: \gamma_1 \gamma_2 \gamma_3 = 1 \rangle.
\end{equation}

Now, $G = \langle f \rangle$ is the deck group of the cover $S \to Y$. By standard covering space theory, there is a surjective homomorphism $\psi \from \pi_1(Y) \to G$ with kernel $\pi_1(S)$. Indeed, each element $g \in \pi_1(Y)$ is realized by a loop in the complement of the cone points, whose path-lift to $S$ defines  the deck transformation 
$\psi(g)$.

\begin{figure}
\begin{overpic}[width=5in]{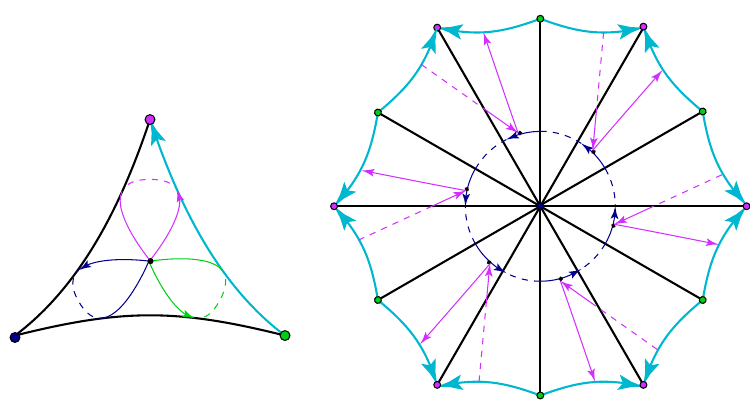}
\put(1,7){$x_3$}
\put(37,7){$x_1$}
\put(19,41){$x_2$}
\put(19,17.5){$y$}
\put(27,24){$\beta$}
\put(24,18.5){$\gamma_1$}
\put(20,25){$\gamma_2$}
\put(13,18.5){$\gamma_3$}
\put(84,38){$\widetilde \gamma_2$}
\put(74,38){$\widetilde \gamma_3$}
\put(77,32){$\widetilde y$}
\end{overpic}
\caption{Left: The quotient orbifold $Y = S / \langle f \rangle$. The notation for curves, arcs, and cone points in $Y$ is used throughout the proof. Right: the reference polygon $P$ in the tiling of  $\HH^2$, with boundary consisting of $2r$ lifts of $\beta \subset Y$. 
Path-lifts of $\gamma_1$ are not shown.}
\label{Fig:Notation}
\end{figure}

Since $\gamma_3$ has order $p_3 = r$ by \eqref{Eqn:OrbifoldGroup}, and since $\pi_1(S) = \ker(\psi)$ is torsion-free, we learn that $\psi(\gamma_3) \in G$ also has order $r$. Thus $H = G / \langle \psi(\gamma_3) \rangle$ is a cyclic group of order $n = |G| / r$. We define a quotient homomorphism $\nu \from \pi_1(Y) \to H$ by composing $\psi$ with the quotient $G \to G / \langle \psi(\gamma_3) \rangle$. Since $\gamma_2$ and $\gamma_3$ generate $\pi_1(Y)$, and since $\gamma_3 \in \ker(\nu)$, it follows that $\nu(\gamma_2)$ generates $H$. We fix an identification $H \cong \ZZ / n \ZZ$ so that $\nu(\gamma_2) = 1 \! \mod n$.

%
%

\subsection*{Geometric setup}
The unique hyperbolic metric on $Y = S^2(p_1, p_2, p_3)$ is obtained by doubling the hyperbolic triangle $T$ with angles $\pi/p_1, \pi/p_2, \pi/p_3$. Accordingly, the universal cover $\HH^2 = \widetilde Y$ is tiled by copies of this triangle, with a reflective symmetry in every edge. See \Cref{Fig:Notation}, right.

Let $\beta \subset Y$ be the geodesic arc from $x_1$ to $x_2$ (\Cref{Fig:Notation}, left). Then the complete preimage of $\beta$ in $\HH^2$ is the $1$--skeleton of a $\pi_1(Y)$--equivariant tiling of  $\HH^2$ by convex polygons, with a preimage of $x_3$ in the center of each polygon. 

Let $P \subset \HH^2$ be a reference polygon containing the lifted basepoint $\widetilde y$. 
Then $P$ is obtained by iteratively reflecting the triangle $T$ about a central vertex $\widetilde x_3$ where the triangle has angle $\pi/r$. Thus $\partial P$ is a concatenation of $2r$ edges, labeled $\vec{\beta} \cdot  \cev{\beta} \cdot \vec{\beta} \cdot \cev{\beta} \cdots$. (See \Cref{Fig:Notation}, right.) If $p_1 = 2$, meaning $T$ has a $\pi/2$ angle at $x_1$, then two collinear lifts of $\beta $ are joined to form a single side of $P$, and we think of $P$ as a convex $r$--gon. Otherwise, if $p_1 > 2$, then $P$ is a convex $2r$--gon. In either case, $P$ is strictly convex at each of its ($r$ or $2r$) corners.

\begin{lemma}\label{Lem:CyclicLabels}
Every polygon in the tiling of $\HH^2$ by copies of $P$ can be labeled by an element of $H = \ZZ/n\ZZ$, so that the polygons that share an edge have labels that differ by $1 \! \mod n$. The action of $\pi_1(Y)$ on $\HH^2$ induces an action on the labels by left-translation, with $\pi_1(S)$ acting trivially.
\end{lemma}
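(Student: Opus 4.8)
The plan is to define the labels directly from the homomorphism $\nu \from \pi_1(Y) \to H$, exploiting the identification of polygons with cosets of the stabilizer of the reference polygon $P$. Since the tiling of $\HH^2$ is $\pi_1(Y)$--equivariant, every polygon has the form $g \cdot P$ for some $g \in \pi_1(Y)$, and the stabilizer of $P$ equals the stabilizer of its central vertex $\widetilde x_3$, namely the cyclic group of order $r$ generated by the elliptic rotation about $\widetilde x_3$ (the only lift of $x_3$ inside $P$). This stabilizer is conjugate to $\langle \gamma_3 \rangle$, since it is realized by small loops encircling $x_3$. I would therefore \emph{define} the label of $g \cdot P$ to be $\nu(g) \in H$. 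This is well defined: if $g \cdot P = g' \cdot P$, then $g^{-1}g'$ fixes $\widetilde x_3$, so it is a power of a conjugate of $\gamma_3$; because $H$ is abelian and $\nu(\gamma_3) = 0$, the whole normal closure of $\gamma_3$ lies in $\ker \nu$, whence $\nu(g) = \nu(g')$.

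Granting this definition, the equivariance statement is immediate. For $h \in \pi_1(Y)$ we have $h \cdot (g \cdot P) = (hg) \cdot P$, whose label is $\nu(hg) = \nu(h) + \nu(g)$ because $\nu$ is a homomorphism to the abelian group $H \cong \ZZ/n\ZZ$. Thus $h$ acts on labels by translation by $\nu(h)$, i.e. by left-translation under the identification $H \cong \ZZ/n\ZZ$. Moreover $\pi_1(S) = \ker(\psi) \subseteq \ker(\nu)$, so $\pi_1(S)$ translates by $0$ and acts trivially, as claimed.

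The only substantive point — and the step I expect to be the main obstacle — is the adjacency condition, which I would argue locally at the shared edge $e$. Each edge of the tiling is a lift of the arc $\beta$ running from $x_1$ to $x_2$, so $e$ has exactly one endpoint $\widetilde x_2$ that is a lift of $x_2$. Every polygon subtends angle $2\pi/p_2$ at such a vertex (two copies of the triangle $T$, each with angle $\pi/p_2$), so exactly $p_2$ polygons meet cyclically around $\widetilde x_2$, and the two polygons $P_1, P_2$ adjacent across $e$ are consecutive in this cyclic arrangement. The stabilizer of $\widetilde x_2$ is cyclic of order $p_2$, generated by the minimal rotation $\rho$ advancing the arrangement by one step; as $\rho$ is realized by a loop about $x_2$, it is a conjugate of $\gamma_2^{\pm 1}$. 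Hence $\rho^{\pm 1} \cdot P_1 = P_2$, and the two labels differ by $\nu(\rho^{\pm 1}) = \pm \nu(\gamma_2) = \pm 1 \bmod n$, using once more that $\nu$ is constant on conjugacy classes. The delicate bookkeeping here is confirming that edge-adjacency corresponds to the \emph{one-step} rotation about $\widetilde x_2$ (rather than a higher power), and that this rotation is conjugate to $\gamma_2$ rather than some $\gamma_2^k$; once the local picture at $\widetilde x_2$ is pinned down, the conclusion follows. As a consistency check one may instead rotate about the $x_1$--endpoint of $e$: since $\gamma_1 = (\gamma_2\gamma_3)^{-1}$ gives $\nu(\gamma_1) = -1$, the two endpoints of $e$ yield opposite signs, exactly as the undirected statement ``differ by $1 \bmod n$'' requires.
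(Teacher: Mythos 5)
Your proposal is correct, and its skeleton — labeling $g \cdot P$ by $\nu(g)$, well-definedness from $\gamma_3 \in \ker(\nu)$, and equivariance plus triviality of $\pi_1(S)$ being formal consequences of $\nu$ being a homomorphism with $\pi_1(S) = \ker(\psi) \subseteq \ker(\nu)$ — is exactly the paper's. The one substantive step, adjacency, is where you diverge: the paper argues globally, observing that a path-lift of the loop $\gamma_2$ is dual to every lift of $\beta$ (i.e.\ $\gamma_2$ crosses $\beta$ exactly once), so the two polygons on either side of a lifted edge are $g(P)$ and $g\gamma_2(P)$, whose labels differ by $\nu(\gamma_2) = 1 \bmod n$; you instead argue locally at the endpoint $\widetilde x_2$ of the shared edge, using that $p_2$ polygons meet there and that the one-click rotation generating the vertex stabilizer is conjugate to $\gamma_2^{\pm 1}$, so that conjugacy-invariance of $\nu$ gives the label difference $\pm 1$. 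Both routes work and both ultimately rest on $\nu(\gamma_2) = 1$; the paper's duality argument buys brevity and sidesteps vertex stabilizers entirely, while your local argument requires the standard but nontrivial fact (which you correctly flag as the delicate point, and only sketch) that a simple loop once around a cone point of order $p_2$ acts on $\HH^2$ as a rotation by exactly $2\pi/p_2$ — not by some other primitive multiple — about the lift it fixes. As a minor bonus, your well-definedness argument via the normal closure of $\gamma_3$ is slightly more robust than the paper's, since it does not rely on the basepoint $\widetilde y$ lying inside $P$ to identify the stabilizer of $P$ with $\langle \gamma_3 \rangle$ on the nose.
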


\begin{proof}
By construction, the stabilizer of the reference polygon $P$ is the stabilizer of its center point $\widetilde x_3$, which is the cyclic group $\langle \gamma_3 \rangle$.

Every polygon in the tiling has the form $g(P)$, where $g \in \pi_1(Y)$
 is uniquely determined up to pre-composition by some power of $\gamma_3$. Thus we give $g(P)$ the label $\nu(g) \in H$, which is well-defined because $\gamma_3 \in \ker(\nu)$.

Now, suppose $P'$ and $P''$ are adjacent along some edge (a preimage of $\beta$). Since there is a path-lift of $\gamma_2$ dual to every path-lift of $\beta$ (compare \Cref{Fig:Notation}, right), the labels on $P'$ and $P''$ differ by $\nu(\gamma_2) = 1 \! \mod n$.

By construction, the action of $\pi_1(Y)$ on $H$ by left-translation induces an action on the labels by left-translation. Finally, since $\pi_1(S) = \ker(\psi) \subset \ker(\nu)$, any pair of polygons that belong to the same $\pi_1(S)$--orbit also have the same label in $H$.
\end{proof}

We remark that $n = |H|$ might equal $1$ (see \Cref{Lem:OnePolygonCurve}). In this case, all labels are equal, so both $\pi_1(Y)$ and $\pi_1(S)$ act trivially.

\begin{lemma}\label{Lem:SurfacePolygons}
The surface $S$ is obtained by gluing together $n = |G|/r$ isometric copies of $P$ in a cyclic fashion. 
These polygons can be called $P_0, \ldots, P_{n-1}$, so that every $P_i$ is glued only to $P_{i + 1}$ and $P_{i-1}$ (with indices modulo $n$). The map $f$ sends $P_i$ to $P_{i+j}$, for a fixed $j$ that is relatively prime to $n$.
\end{lemma}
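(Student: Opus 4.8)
The plan is to push the labeled tiling of $\HH^2$ from \Cref{Lem:CyclicLabels} down to $S = \HH^2/\pi_1(S)$ and then read off all three assertions from the resulting labels. Since $\pi_1(S) = \ker(\psi) \subseteq \ker(\nu)$ acts trivially on labels, the $\pi_1(Y)$--equivariant tiling descends to a tiling of $S$ in which each tile inherits a well-defined label in $H = \ZZ/n\ZZ$. Because $\pi_1(S)$ is torsion-free, it meets no conjugate of the polygon stabilizer $\langle\gamma_3\rangle$; as distinct tiles of $\HH^2$ have disjoint interiors and $\pi_1(S)$ permutes them, each tile of $\HH^2$ maps injectively into $S$, so the tiles of $S$ are genuine embedded isometric copies of $P$.

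First I would identify the set of tiles of $S$ as a $G$--set. The deck group $G = \pi_1(Y)/\pi_1(S)$ permutes the tiles of $S$, transitively because $\pi_1(Y)$ acts transitively on the tiles of $\HH^2$. The stabilizer in $G$ of the tile $\overline{P}$ containing $\widetilde y$ is the image of $\operatorname{Stab}_{\pi_1(Y)}(P) = \langle\gamma_3\rangle$, namely $\langle\psi(\gamma_3)\rangle$, which has order $r$. Hence $S$ is tiled by exactly $|G|/r = n$ copies of $P$, and the label map of \Cref{Lem:CyclicLabels} identifies these tiles $G$--equivariantly with $G/\langle\psi(\gamma_3)\rangle = H$. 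In particular the label is a bijection from the tiles of $S$ onto $\ZZ/n\ZZ$, and I name the tile of label $i$ by $P_i$.

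The adjacency claim is then immediate from \Cref{Lem:CyclicLabels}: any tile glued to $P_i$ along an edge differs from it in label by $\pm 1 \bmod n$, so $P_i$ is glued only to $P_{i+1}$ and $P_{i-1}$; since the $2r$ sides of $\partial P$ alternate between the orientations $\vec\beta$ and $\cev\beta$, both neighbors actually occur, giving the cyclic gluing. For the last assertion, lift $f$ to $\widetilde f \in \pi_1(Y)$ with $\psi(\widetilde f) = f$. By the equivariance in \Cref{Lem:CyclicLabels}, $f$ shifts labels by left-translation by $\nu(\widetilde f)$, so $f(P_i) = P_{i+j}$ with $j = \nu(\widetilde f)$ the image of the generator $f$ in $H$. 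As $f$ generates $G$ and $H$ is a quotient of $G$, the element $j$ generates $\ZZ/n\ZZ$, that is $\gcd(j,n)=1$.

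The only step requiring genuine care is the bijection in the second paragraph, namely that distinct tiles of $S$ never share a label. This reduces to the stabilizer computation yielding exactly the order--$r$ cyclic group $\langle\psi(\gamma_3)\rangle$, equivalently to $\ker(\nu) = \pi_1(S)\langle\gamma_3\rangle$ together with $\psi(\gamma_3)$ having order $r$, both already recorded in the algebraic setup. Everything else is bookkeeping with the labels supplied by \Cref{Lem:CyclicLabels}.
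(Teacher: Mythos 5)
Your proposal is correct, and its overall architecture is the same as the paper's: descend the labeled tiling of \Cref{Lem:CyclicLabels} to $S$, then read off the adjacency pattern and the $f$--action from the labels. The one step you handle genuinely differently is the count of tiles. The paper counts geometrically: each polygon is the Voronoi cell of the lift of $x_3$ at its center, so polygons correspond to lifts of $x_3$, of which there are $|G|/r$ by the computation in \Cref{Lem:FixedPoints}; this also gives the geometric picture that $f$ permutes polygons exactly as it cyclically permutes the lifts of $x_3$. You instead run orbit--stabilizer on the $G$--action on tiles of $S$, identifying the stabilizer of the base tile with $\langle \psi(\gamma_3)\rangle$ (via $\ker(\nu) = \pi_1(S)\langle\gamma_3\rangle$ and the fact that $\psi(\gamma_3)$ has order $r$), which identifies the tile set $G$--equivariantly with $H = \ZZ/n\ZZ$. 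Your route is cleaner algebraically and makes explicit something the paper leaves implicit, namely that the label map is a bijection from tiles of $S$ to $\ZZ/n\ZZ$; the paper's route buys the Voronoi interpretation. One small overstatement in your first paragraph: the closed tile generally does \emph{not} map injectively into $S$ --- distinct vertices of a single tile can be identified in $S$, and when $n=1$ entire sides of $P_0$ are glued to each other (this is exactly the situation of \Cref{Lem:OnePolygonCurve}). Your freeness argument actually yields only that the \emph{interior} of each tile embeds, which is the statement the paper makes and is all that your subsequent argument uses, so nothing downstream is affected.
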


\begin{proof}
By construction, the torsion-free subgroup $\pi_1(S) < \pi_1(Y)$ acts freely on the set of polygons in the tiling of $\HH^2$. Thus the interior of each polygon embeds in $S$, and the images of distinct polygons either coincide or have disjoint interiors. Thus $S$ is tiled by isometric copies of $P$.

Every polygon in $S$ contains a lift of $x_3 \in Y$ at its center. In fact, the polygonal tiling of $S$ is the Voronoi tessellation of $S$ with respect to the complete preimage of $x_3$. Since $f$ acts on the lifts of $x_3$ by cyclic permutation, it acts on the polygons in the same manner.
As discussed in \Cref{Lem:FixedPoints}, there are $n = |G|/r$ distinct lifts of $x_3$ to $S$, each with a stabilizer of order $r$, so there are also $n$ distinct polygons.

The remaining conclusions follow from \Cref{Lem:CyclicLabels}. Indeed, $\pi_1(S)$ acts trivially on the $H$--labels of the polygons in $\HH^2$, so every polygon in the tiling of $S$ inherits a label in $H = \ZZ / n\ZZ$.  The transitive left-translation action of $\pi_1(Y)$ on the labels descends to a transitive action of $\langle f \rangle = G = \pi_1(Y)/\pi_1(S)$, because $\pi_1(S)$ acts trivially. Thus $f(P_i) = P_{i+j}$, for a fixed $j$ that is relatively prime to $n$.
Finally, \Cref{Lem:CyclicLabels} also says that adjacent polygons in $\HH^2$ have labels that differ by $1$, hence the same is true in $S$. Thus every $P_i$ must be glued to at least one of $P_{i \pm 1}$. In fact, half the sides of $P_i$ must be glued to $P_{i + 1}$ and the other half to $P_{i-1}$, because of the transitive $f$--action.
\end{proof}

\begin{lemma}\label{Lem:OnePolygonCurve}
Suppose that $r = |G|$. 
Then $S$ can is obtained from the polygon $P_0$ by some side pairing. The map $f$ acts on $P_0$ by rotation about the central vertex $\widehat x_3$. There is an essential simple closed curve $\alpha \subset S$ built by connecting a pair of sides of $P_0$, with the property that $i(\alpha, f(\alpha)) \leq 1$. 
\end{lemma}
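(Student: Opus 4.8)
The plan is to realize $\alpha$ as a single geodesic arc inside the one polygon $P_0$, and then to exploit the fact that its image $f(\alpha)$ is \emph{also} a single geodesic arc in the \emph{same} convex polygon, so that the two can cross at most once. First I would record the structural input. Since $r = |G|$, \Cref{Lem:SurfacePolygons} gives $n = |G|/r = 1$, so $S$ is obtained from $P_0$ alone by a side-pairing; and by \Cref{Lem:FixedPoints} the isometry $f$ fixes the unique preimage $\widehat x_3$ of $x_3$. Being an orientation-preserving isometry fixing the interior point $\widehat x_3$, the map $f$ must act on $P_0$ as a rotation of order $r$ about $\widehat x_3$, and in particular $f(P_0) = P_0$.

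Next I would choose the sides and build the arc. The side-pairing of $\partial P_0$ is realized by deck transformations in $\pi_1(S)$: there is some $g \in \pi_1(S)$ carrying one side $A$ of $P$ to another side $B = g(A)$ of $P$, so that $x \in A$ is identified in $S$ with $g^{-1}(x) \in B$. I would pick a generic point $a$ in the relative interior of $A$, set $b = g^{-1}(a) \in B$, and let $\alpha_0 \subset P_0$ be the geodesic arc from $a$ to $b$. Two smoothness observations keep this clean: no side is glued to itself (an orientation-reversing self-gluing fixes the side's midpoint and would create a cone point), and no two \emph{adjacent} sides are glued (such a gluing fixes their common vertex, again producing a cone point). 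Hence $A$ and $B$ are distinct and non-adjacent, so by strict convexity of $P$ the interior of $\alpha_0$ lies in the interior of $P_0$; thus $\alpha_0$ is embedded, meets $\partial P_0$ only at $a,b$, and closes up (via $a \sim b$) to a simple closed curve $\alpha$.

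To see that $\alpha$ is essential, I would lift it to $\HH^2$ starting at $a \in P$; the lift is $\alpha_0$, ending at $b = g^{-1}(a)$, so $[\alpha] = g^{-1} \in \pi_1(S)$. Since $\pi_1(S) \cap \langle \gamma_3\rangle = 1$, the only element of $\pi_1(S)$ fixing $P$ is the identity, so $g(P) \neq P$ forces $g \neq 1$; as $\pi_1(S)$ is torsion-free, $\alpha$ is non-nullhomotopic, i.e.\ essential. Finally, for the intersection bound, note that $f(\alpha)$ meets $P_0$ in the single geodesic arc $f(\alpha_0)$, so \emph{every} point of $\alpha \cap f(\alpha)$ lies in $P_0$. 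Two distinct geodesics of $\HH^2$ meet at most once, whence $\alpha_0$ and $f(\alpha_0)$ cross at most once in $\intr P_0$; choosing $a$ generically ensures that the glued endpoints $\{a,b\}$ and $\{f(a),f(b)\}$ are not identified in $S$, removing any boundary coincidences. These representatives therefore meet at most once transversally, giving $i(\alpha, f(\alpha)) \leq 1$.

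The genuinely delicate points are the two smoothness remarks ruling out self-gluings and adjacent gluings (which secure both the embeddedness of $\alpha_0$ and, together with $g\neq 1$, the essentiality of $\alpha$) and the genericity of $a$ used to discard boundary intersections. By contrast, the heart of the argument — that $f(\alpha)$ never leaves the single convex polygon $P_0$ and hence crosses $\alpha$ at most once — is essentially automatic once the $n=1$ structure from \Cref{Lem:SurfacePolygons} is in hand, so I expect the main work to be purely in verifying these routine embeddedness and genericity claims.
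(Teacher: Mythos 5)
Your proof is correct and takes essentially the same route as the paper: invoke \Cref{Lem:SurfacePolygons} with $n = |G|/r = 1$ to get the one-polygon structure and the rotation action of $f$, join two paired sides of $P_0$ by a geodesic arc to form $\alpha$, and conclude $i(\alpha, f(\alpha)) \leq 1$ because $\alpha$ and $f(\alpha)$ each meet $P_0$ in a single geodesic arc of the same convex polygon, so they cross at most once. The only cosmetic differences are that you certify essentiality via nontriviality of the side-pairing deck transformation (freeness of the $\pi_1(S)$--action), where the paper instead notes that a convex lift of $P_0$ cannot be glued to itself along one of its own sides --- the same underlying fact --- and that your genericity argument for endpoint coincidences spells out a detail the paper leaves implicit.
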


\begin{proof}
Since $|G|/r = 1$, \Cref{Lem:SurfacePolygons} implies that $S$ is tiled by a single polygon $P_0$ isometric to $P$. Thus $S$ can be constructed from $P_0$ by some side pairing.
The group $G = \langle f \rangle$ must stabilize $P_0$ and fix its center point
 $\widehat x_3$ (compare \Cref{Lem:FixedPoints}).
 Thus $f$ acts on $P_0$ as a rotation of order $r = |G|$ about $\widehat x_3$.

Let $\alpha \subset S$ be a simple closed curve created by joining two sides  $s, s' \subset P_0$ that are paired in $S$. 
This curve must be essential in $S$: if not, then a lift of $P_0$ to $\HH^2$ would be glued to itself along a preimage of $s$, which contradicts the convexity of the polygon.
Since $f$ acts on $P_0$ by rotation, we conclude that $i(\alpha, f(\alpha)) \leq 1$. 
\end{proof}

\Cref{Lem:OnePolygonCurve} also follows from a theorem of Kulkarni \cite[Theorem 2]{Kulkarni:Periodic}.


\begin{lemma}\label{Lem:SeveralPolygonsCurve}
Suppose that $r < |G|$. 
Then there is an essential simple closed curve $\alpha \subset S$ that intersects only two polygons $P_0$ and $P_1$ in the tiling of $S$, and intersects each of them in a simple arc.
Furthermore, $\alpha$ satisfies $i(\alpha, f(\alpha)) \leq 1$. In the special case where $|G| = 2 r$, the curve $\alpha$ satisfies $i(\alpha, f(\alpha)) = 0$.
\end{lemma}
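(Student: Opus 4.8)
\emph{Overall strategy.} The plan is to build $\alpha$ from two geodesic arcs, one in each of $P_0$ and $P_1$, and to exploit the strict convexity of the polygons: each overlap of $\alpha$ with $f(\alpha)$ will be confined to a single convex polygon, inside which two geodesic arcs meet at most once. By \Cref{Lem:SurfacePolygons} the half-and-half splitting of the sides of $P_0$ gives at least two edges of the tiling shared between $P_0$ and $P_1$; fix two of them, $E_1$ and $E_2$, choose points $q_1 \in E_1$ and $q_2 \in E_2$, and let $a_0 \subset P_0$ and $a_1 \subset P_1$ be the geodesic arcs from $q_1$ to $q_2$ inside each polygon. Set $\alpha = a_0 \cup a_1$. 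The two arcs have disjoint interiors (lying in polygons with disjoint interiors) and meet exactly at the shared points $q_1, q_2$, so $\alpha$ is an embedded closed curve crossing only $E_1, E_2$, with one arc in each of $P_0, P_1$. For essentiality I reuse the convexity argument of \Cref{Lem:OnePolygonCurve}: if $\alpha$ were null\-homotopic, a lift to $\HH^2$ would close up as the concatenation of a lift of $a_0$ and a lift of $a_1$, i.e.\ two geodesic segments in two adjacent tiles sharing both endpoints. Two geodesics of $\HH^2$ with common endpoints coincide, contradicting that the tiles have disjoint interiors; hence $\alpha$ is essential.

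\emph{Locating the intersections.} By \Cref{Lem:SurfacePolygons}, $f(P_i) = P_{i+j}$ with $\gcd(j,n)=1$, so $\alpha \subset P_0 \cup P_1$ while $f(\alpha) \subset P_j \cup P_{j+1}$. If $j \not\equiv \pm 1 \pmod n$ these two pairs of polygons are disjoint, so $\alpha$ and $f(\alpha)$ are disjoint and $i(\alpha, f(\alpha)) = 0$. I may therefore assume $j \equiv \pm 1 \pmod n$.

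\emph{The count for $n \geq 3$.} Suppose $n \geq 3$ and $j \equiv 1$. Then $P_0, P_1, P_2$ are distinct, so $(P_0 \cup P_1) \cap (P_1 \cup P_2) = P_1$ and every point of $\alpha \cap f(\alpha)$ lies in $P_1$. There $\alpha$ contributes $a_1$, while $f(\alpha)$ contributes $f(a_0)$ (since $f(a_1) \subset P_2$). Both $a_1$ and $f(a_0)$ are geodesic arcs in the strictly convex polygon $P_1$, hence meet at most once, giving $i(\alpha, f(\alpha)) \leq 1$. If instead $j \equiv -1$, the same argument applies verbatim with the single overlap polygon $P_0$ in place of $P_1$ and $f(a_1)$ in place of $f(a_0)$. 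This disposes of every case with $n \geq 3$.

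\emph{The case $n = 2$ and the main obstacle.} When $|G| = 2r$, i.e.\ $n = 2$, the map $f$ interchanges $P_0$ and $P_1$, so $f(\alpha)$ meets $\alpha$ in \emph{both} polygons and the convexity bound only yields $i \leq 2$. To upgrade this to $i = 0$ I must choose $E_1, E_2$ so that the relevant geodesic arcs are actually disjoint. In a convex polygon two geodesic arcs are disjoint precisely when their four boundary endpoints are unlinked, so it suffices to arrange that $\{E_1, E_2\}$ and $\{f(E_1), f(E_2)\}$ are unlinked in the cyclic order on $\partial P_0$ (and hence on $\partial P_1$). Here $G = \langle f \rangle$ acts freely and transitively on the set of shared edges, since these correspond to the lifts of the midpoint of $\beta$, a regular point with trivial stabilizer; thus $f$ permutes the shared edges in a single cycle. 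The plan is to match this cyclic permutation against the geometric cyclic order around $\partial P_0$ and pick a pair of edges whose $f$\nobreakdash-images do not interleave them. I expect the main obstacle to lie exactly in this final combinatorial step: the soft convexity argument that sufficed for $n \geq 3$ must be replaced by explicit bookkeeping that pins down how the algebraic $f$\nobreakdash-action on the shared edges sits inside their boundary cyclic order, using the dihedral symmetry of the regular polygon $P$ together with the quotient homomorphism $\nu$ of \Cref{Lem:CyclicLabels}.
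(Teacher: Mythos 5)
Your construction of $\alpha$, your essentiality argument, and your treatment of $n \geq 3$ all match the paper's proof (your $n \geq 3$ count is, if anything, more detailed than the paper's). The problem is the case $n = 2$, i.e.\ $|G| = 2r$ --- precisely the case where the lemma promises $i(\alpha, f(\alpha)) = 0$, and the case the paper needs for \Cref{Ex:FixedPointFree} and Schleimer's question. There you correctly reduce the claim to choosing the two shared edges $E_1, E_2$ so that $\{E_1, E_2\}$ and $\{f(E_1), f(E_2)\}$ are unlinked in the cyclic order on the boundary of each polygon, but you then only announce a ``plan'' for the combinatorial bookkeeping and explicitly flag it as the expected obstacle. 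Identifying the obstacle is not the same as overcoming it: as written, the proposal does not prove the $|G| = 2r$ statement, and it is not clear that the route you sketch (matching the free, transitive $G$--action on the $2r$ shared edges against the boundary cyclic order via $\nu$ and the dihedral symmetry of $P$) converges without further ideas.

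The missing mechanism is short, and it is exactly where the hypothesis $r < |G|$ enters through \Cref{Lem:FixedPoints}. Choose $E_1 = s$ and $E_2 = s'$ to be \emph{adjacent} sides of $P_0$, meeting at a corner $v$, oriented so that $P_0$ lies to the left of each and so that $s$ followed by $s'$ passes through $v$. Since $r < |G|$, the map $f$ has no fixed points. If $f(s) = s$, then because $f$ interchanges $P_0$ and $P_1$, which are the two sides of $s$, the map $f$ would have to reverse $s$ and fix its midpoint --- impossible. If $f(s) = s'$, the orientation convention forces $f$ to carry the oriented edge $s$ to the reversal of the oriented edge $s'$, hence to send the terminal vertex of $s$ (namely $v$) to the initial vertex of $s'$ (also $v$) --- again impossible. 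So $\{f(s), f(s')\}$ is disjoint from $\{s, s'\}$. Now adjacency pays off: two \emph{disjoint} pairs of \emph{consecutive} sides of a convex polygon can never be linked in the boundary cyclic order, since linking would require a side of one pair to lie strictly between the two consecutive sides of the other. Hence $f(a_0)$ is disjoint from $a_1$ in $P_1$, and $f(a_1)$ is disjoint from $a_0$ in $P_0$, giving $i(\alpha, f(\alpha)) = 0$. In short: the choice of adjacent sides plus fixed-point-freeness replaces the unresolved bookkeeping in your final step.
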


\begin{proof}
By \Cref{Lem:SurfacePolygons}, $S$ is tiled by $n$ isometric polygons 
 $P_0, \ldots, P_{n-1}$, with each $P_i$ glued to $P_{i + 1}$ along half its sides and to $P_{i-1}$ along half its sides. In particular, $P_0$ must have at least two sides (labeled $s,s'$) that are both glued to $P_1$. 
We construct a simple closed curve $\alpha$ such that $\alpha_0 = \alpha \cap P_0$ is a geodesic segment from $s$ to $s'$, and  $\alpha_1 = \alpha \cap P_1$ is a geodesic segment with the same endpoints as $\alpha_0$. 
Now, a path-lift of $\alpha$ to $\HH^2$ must start in some lift $\widetilde P_0$ of $P_0$, run through a lift of $\widetilde P_1$ of $P_1$, and end in another lift $g(\widetilde P_0)$ of $P_0$. Furthermore, 
$g(\widetilde P_0) \neq \widetilde P_0$, because the strictly convex polygons $\widetilde P_0, \widetilde P_1 \subset \HH^2$ cannot meet along two distinct sides. Thus $\alpha$ is essential. 

We emphasize that the above construction works regardless of the choice of sides $s,s' \subset P_0$, provided that both sides are glued to $P_1$. In the special case where $n = 2$, hence $P_0$ is glued to $P_1 = P_{-1}$ along \emph{all} of its sides, we choose $s$ and $s'$ to be adjacent at a corner $ v \in P_0$. We orient and label $s$ and $s'$ so that $P_0$ is to their left, and so that $\vec{s} \cdot \vec{s}\, '$ are concatenated at $v$. 

To prove that $i(\alpha, f(\alpha)) \leq 1$, we consider two cases: $n =2$ and $n \geq 3$. 

If $n = 2$, we claim that $f(s) \notin \{s, s'\}$.  Since the oriented edges $\vec{s}, \vec{s}\, '$ have $P_0$ to their left, they have $P_1$ to their right. By \Cref{Lem:SurfacePolygons}, $f$ interchanges $P_0$ and $P_1$.  Thus, if $f$ maps $s$ to itself, it must reverse the orientation on $s$ and fix its midpoint, contradicting \Cref{Lem:FixedPoints}. Similarly, if $f$ maps $s$ to $s'$, we must have $f(\vec{s}) = \cev{s} '$, hence $f$ maps the terminal vertex of $s$ (namely $v$) to the initial vertex of $s'$ (also $v$), again contradicting \Cref{Lem:FixedPoints}. By an identical argument, $f(s') \notin \{s, s'\}$. Since $\alpha_1 = \alpha \cap P_1$ is a geodesic segment from $s$ to $s'$, and $f(\alpha_0) \subset P_1$ is a geodesic segment between two consecutive sides, neither of which coincides with $s$ or $s'$, we conclude that $f(\alpha_0)$ is disjoint from $\alpha_1$.  
 Similarly, $f(\alpha_1) \subset P_0$ is disjoint from $\alpha_0$. This proves that  $i(\alpha, f(\alpha)) = 0$ when $n = 2$.

Finally, suppose $n \geq 3$. Then \Cref{Lem:SurfacePolygons} implies $f(P_0) = P_j$ and $f(P_1) = P_{j+1}$, for some 
$j \not \equiv 0 \! \mod n$. Consequently, at least one of $P_j$ and $P_{j+1}$ must be distinct from $P_0$ and $P_1$. Thus $\alpha$ and $f(\alpha)$ have at most one polygon in common, hence $i(\alpha, f(\alpha)) \leq 1$.
\end{proof}

Combining 
\Cref{Lem:OnePolygonCurve,Lem:SeveralPolygonsCurve} completes the hyperbolic case of \Cref{Thm:Main}. \qed

\section{Examples}

For this section, we continue to study the situation where $f \from S \to S$ is a periodic, irreducible isometry. By \Cref{Lem:Turnover}, $Y = S / \langle f \rangle = S^2(p_1, p_2, p_3)$, a sphere with three cone points. It is straightforward to construct examples where such a map $f$ has fixed points, and where it does not.

\begin{example}\label{Ex:Bolsa}
Let $S$ be a surface of genus $g \geq 2$, realized as the quotient of a regular $(4g+2)$--gon $P_0$, with opposite sides identified. Let $f \from S \to S$ be a periodic isometry of order $4g+2$, which acts on $P_0$ by a rotation by one click about the center point $\widehat x_3$. Then $S / \langle f \rangle$ is a sphere with cone points $x_1, x_2, x_3$ of order $2, 2g+1, 4g_2$. Indeed, $x_1$ is the quotient of the middle of a side of $P_0$, while $x_2$ is the quotient of the corners, and $x_3$ is the quotient of the center point $\widehat x_3$. Compare \cite[Section 7.2.4]{FarbMargalit:Primer}.

The midpoints of sides of $P$ fall into $2g+1$ distinct $\langle f \rangle$--orbits, and the corners of $P$ fall into two distinct $\langle f \rangle$--orbits, with the generator $f$ permuting the orbits. In particular, $\widehat x_3$ is the only fixed point of $f$.
\end{example}

\begin{example}\label{Ex:FixedPointFree}
Let $Y = S^2(p_1, p_2, p_3)$ be the hyperbolic orbifold with cone points of order
\[
p_1 = 2\cdot 3 = 6, 
\qquad
p_2 = 2\cdot 5 = 10,
\qquad
p_3 = 3\cdot 5 = 15.
\] 
Let $\gamma_i$ be a loop about $x_i$, as in \Cref{Fig:Notation}, so that $\pi_1(Y)$ has the presentation \eqref{Eqn:OrbifoldGroup}. Let  $\psi \from \pi_1(Y) \to G = \ZZ / 30 \ZZ$ be the homomoprphism
\[
\gamma_1 \mapsto 5 \!\mod 30, 
\qquad
\gamma_2 \mapsto -3 \!\mod 30, 
\qquad
\gamma_3 \mapsto -2 \!\mod 30.
\]
In particular, $\psi$ maps every $\gamma_i$ to an element of order $p_i$. Since every torsion element of $\pi_1(Y)$ is conjugate to a power of some $\gamma_i$, the kernel of $\psi$ is torsion-free, and 
$S = \HH^2 / \ker(\psi)$ is a non-singular surface with a degree 30 cyclic cover $S \to Y$. The deck group $G$ is generated by a map $f \from S \to S$ of order 30. An Euler characteristic calculation shows that $S$ has genus $11$. 

By \Cref{Lem:FixedPoints}, $f$ has no fixed points.
By \Cref{Lem:SurfacePolygons}, $S$ is obtained as the union of two $30$--gons $P_0$ and $P_1$ that are interchanged by $f$. By \Cref{Lem:SeveralPolygonsCurve}, there is an essential simple closed curve $\alpha \subset S$ that intersects each $P_i$ in an arc, such that $i(\alpha, f(\alpha)) = 0$. In particular, this map $f$ is irreducible but not strongly irreducible, producing an affirmative answer to Schleimer's question \cite[first bullet of Section 5]{Schleimer:StronglyIrreducible}.
\end{example}

More generally, if $Y = S / \langle f \rangle = S^2(p_1, p_2, p_3)$, then \cite[Lemma 7.11]{FarbMargalit:Primer} shows that
\[|G| = \lcm(p_i, p_j) 
\quad \text{for any} \quad
i \neq j \in \{1,2,3\}.
\]
If $f$ has no fixed points, meaning $p_i < |G|$ for each $i$, then a $\lcm$ analysis shows that $|G|$ has at least three distinct prime factors. It follows that \Cref{Ex:FixedPointFree}, with a surface $S$ of genus 11 and a map $f \from S \to S$ of order $30 = 2 \cdot 3 \cdot 5$, is the smallest example of a periodic, irreducible, fixed-point-free isometry.

\bibliographystyle{amsplain}
\bibliography{biblio}

\end{document}